\title[strongly $\cc$-linear convexity]{Regularity of a $\db$-solution operator  for\\ strongly $\cc$-linearly convex domains with\\ minimal smoothness}
\author[X. Gong]{Xianghong Gong$^{\dag}$}
\author[L. Lanzani]{Loredana Lanzani$^{\dag\dag}$}
 \address{Department of Mathematics,
 University of Wisconsin-Madison, Madison, WI 53706, U.S.A.}
 \email{gong@math.wisc.edu}
\address{Department of Mathematics,
Syracuse University, Syracuse, NY 13244-1150, U.S.A.}
\email{llanzani@syr.edu}
\thanks{$^{\dag}$Partially supported by a grant from the Simons
Foundation (award number:~505027). }
\thanks{${\dag\dag}$ Partially supported by the National Science Foundation (DMS 1504589; DMS-1901978)}
\keywords{Strongly $\mathbf C$-linear convexity,  homotopy formula, Lipschitz estimates}
 \subjclass[2010]{32A06, 32T15, 32W05}
\newcommand{\dist}{\operatorname{dist}}
\newtheorem{thm}{Theorem}[section]
\newtheorem{cor}[thm]{Corollary}
\newtheorem{prop}[thm]{Proposition}
\newtheorem{lemma}[thm]{Lemma}
\theoremstyle{definition}
\newtheorem{exmp}[thm]{Example}
\newtheorem{rem}[thm]{Remark}
\renewcommand{\th}[1]{\begin{thm}\label{#1}}
\renewcommand{\eth}{\end{thm}}
\newcommand{\co}[1]{\begin{cor}\label{#1}}
\newcommand{\eco}{\end{cor}}
\renewcommand{\le}[1]{\begin{lemma}\label{#1}}
\newcommand{\ele}{\end{lemma}}
\newcommand{\pr}[1]{\begin{prop}\label{#1}}
\newcommand{\epr}{\end{prop}}
\newcommand{\ga}{\begin{gather}}
\newcommand{\ega}{\end{gather}}
\newcommand{\gan}{\begin{gather*}}
\newcommand{\egan}{\end{gather*}}
\newcommand{\al}{\begin{align}}
\newcommand{\eal}{\end{align}}
\newcommand{\aln}{\begin{align*}}
\newcommand{\ealn}{\end{align*}}
\newcommand{\eq}[1]{\begin{equation}\label{#1}}
\newcommand{\eeq}{\end{equation}}
\newcommand{\f}[2]{\frac{#1}{#2}}
\newcommand{\ci}{~\cite}
\newcommand{\cc}{{\bf C}}
\newcommand{\nn}{{\bf N}}
\newcommand{\rr}{{\bf R}}
\newcommand{\ov}{\overline}
\newcommand{\RE}{\operatorname{Re}}
\newcommand{\IM}{\operatorname{Im}}
\renewcommand{\dbar}{\overline\partial}
\newcommand{\sign}{\operatorname{sign}}
\newcommand{\all}{\alpha}
\newcommand{\del}{\delta}
\newcommand{\var}{\varphi}
\newcommand{\e}{\epsilon}
\newcommand{\Om}{\Omega}
\newcommand{\la}{\lambda}
\newcommand{\pd}{\partial}
\newcommand{\re}[1]{(\ref{#1})}
\newcommand{\rea}[1]{$(\ref{#1})$}
\newcommand{\rl}[1]{Lemma~\ref{#1}}
\newcommand{\rp}[1]{Proposition~\ref{#1}}
\newcommand{\rt}[1]{Theorem~\ref{#1}}
\newcommand{\rrem}[1]{Remark~\ref{#1}}
\newcommand{\rpa}[1]{Proposition~$\ref{#1}$}
\newcommand{\rta}[1]{Theorem~$\ref{#1}$}
\newcommand{\db}{\dbar}
\newcounter{pp}
\newcommand{\bpp}{\begin{list}{$\hspace{-1em}\alph{pp})$}{\usecounter{pp}}}
\newcommand{\epp}{\end{list}}
\newcounter{ppp}
\newcommand{\bppp}{\begin{list}{$\hspace{-1em}(\roman{ppp})$}{\usecounter{ppp}}}
\newcommand{\eppp}{\end{list}}
\def\beq{\begin{equation}}
\def\eeq{\end{equation}}
\begin{document}

\begin{abstract}
We prove regularity of solutions of the $\bar\partial$-problem in the H\"older-Zygmund spaces of bounded, strongly $\mathbf C$-linearly convex domains of class $C^{1,1}$. The proofs rely on a new analytic characterization of said domains which is of independent interest, and on
  techniques that were recently developed by the first-named author to prove estimates
 for the
 $\bar\partial$-problem on strongly pseudoconvex domains of class $C^2$.
  \end{abstract}

 \maketitle
{\centerline{\em For Eli}}
\medskip

\setcounter{thm}{0}\setcounter{equation}{0}
\section{Introduction}\label{S:intro}
 Let $D$ be a bounded domain in $\cc^n$
 with a
defining function $r$ that is $C^1$-smooth on a neighborhood of $\ov D$.
We say that
$D$ is {\it strongly $\cc$-linearly convex} if
\eq{c+0}
|r_\zeta\cdot(\zeta-z)|\geq c|\zeta-z|^2, \quad z\in\ov D, \quad \zeta\in\pd D
\eeq
for a positive constant $c$ that may depend on $r$.

 We say that $D$   is {\it strictly $\cc$-linearly convex} if it satisfies the weaker condition
 \eq{cond-sC}
|\,
 r_\zeta\cdot(\zeta-z)|>0, \quad \zeta\in\pd D, \quad z\in {\ov D\setminus \{\zeta\}}.
\eeq

 We say that $D$   is {\it weakly}
 $\cc$-linearly convex\footnote{
 sometime simply referred to  as ``weakly linearly convex".}
 if it satisfies the even milder condition
 \eq{cond-wsC}
|\,
 r_\zeta\cdot(\zeta-z)|>0, \quad \zeta\in\pd D, \quad z\in D.
\eeq
 \vskip0.08in
The notion of strong (resp., strict; weak) $\cc$-linear convexity is essentially intermediate between strong (resp., strict; weak) convexity and strong (resp.
  weak) pseudoconvexity\footnote{while the notions of ``strong'' and ``strict'' convexity (resp., ``strong'' and ``strict'' $\cc$-linear convexity) are distinct from one another, there is no distinction between ``strong'' and ``strict'' pseudoconvexity and the two terms are often interchanged. See \cite{MR3084008}*{p. 261}.}; it was first introduced by Behnke and Peschl~\ci{MR1512986} in 1935 and has since played a central role in the theory of Hardy spaces and holomorphic singular integral operators. The purpose of this paper is to extend the analysis of these domains to the
 $\dbar$-problem. Strongly $\cc$-linearly convex domains of class $C^2$ are, in particular, strongly
 pseudoconvex;
  see \cite{MR3084008}.
 Thus
  the $\dbar$-problem for such domains is well understood; our main goal here is to go below the $C^2$-category and extend the theory of $\dbar$ to the class $C^{1,1}$.
  As is well known,
$\cc$-linearly convex domains support a Cauchy-Fantappi\'e kernel, the {\em Cauchy-Leray kernel}, that is holomorphic in the output variable $z\in D$ and is ``canonical'' in the sense that  it is independent of the choice of defining function $r$ (unlike other instances of Cauchy-Fantappi\`e kernels). The Cauchy-Leray kernel determines a singular integral operator
 (the {\em Cauchy-Leray transform} acting on functions supported in the topological boundary $\pd D$) whose operator-theoretic properties depend on the
 boundary regularity and on the amount of $\cc$-linear convexity enjoyed by the domain.
E. M. Stein and the second-named author
 have shown ~\ci{MR3250300} that the
Cauchy-Leray transform associated to a bounded, strongly $\cc$-linearly convex domain $D\subset\cc^n$ with boundary of class $C^{1,1}$, initially defined for functions in $C^1(\pd D)$, extends to a bounded operator:
$L^p(\pd D,\mu)\to L^p(\pd D,\mu)$ for $1<p<\infty$ and with $\mu$ belonging to a family of  boundary measures that includes
induced Lebesgue measure $\sigma$. The proof relies, among other things, on the analysis of the (suitably defined)
action of the complex Hessian of $r$ (assumed to be only of class $C^{1,1}$) over the complex tangent space of $\pd D$. In the subsequent paper \cite{LSEx} examples were supplied that indicate that the two hypotheses of strong $\cc$-linear convexity and class $C^{1,1}$ are essentially optimal.

 \vskip0.08in

In this paper we provide  integral formulas-based solutions to the $\dbar$-problem for bounded,
 strongly $\cc$-linearly convex domains of class $C^{1,1}$:
 we first construct
homotopy formulas based on
 a hierarchy of Cauchy-Leray-Koppelman kernels that give rise to integral operators acting on forms of type $(0, q)$, $q=0,\ldots, n$ with coefficients
 defined
  on $\ov D$, the closure of the ambient domain. From these we obtain
 new estimates in the H\"older and Zygmund spaces  that give the expected optimal gain of $1/2$ derivatives.  It turns out
  that
  the
   classical Leray-Koppelman
   homotopy formulas are in fact true under milder notions of $\cc$-linear convexity: this is the reason why
  we   mentioned condition
  \eqref{cond-wsC} above.
 Strong $\cc$-linear convexity is however needed
 both to justify the use of the regularity estimates that were obtained by the first-named author in ~\ci{MR3961327}, and to prove a new homotopy formula in
  this paper.

\vskip0.08in
Our proofs rely
on the
 following characterization, which is of independent interest,
  of strong $\cc$-linear convexity
 for domains whose boundary is assumed to be of class $C^{1,1}$.
\th{equiv-cond}Let $D$ be a bounded domain of class $C^{1,1}$, and let $r$ be any defining function for $D$ that is of class $C^{1,1}$ in a neighborhood of $\ov D$.
Then we have that condition \eqref{c+0}
being satisfied by $r$
 is equivalent to
the same
 $r$ satisfying each of the following:
\eq{C+}
|r_\zeta\cdot(\zeta-z)|\geq c_1|\zeta-z|^2\ \  \text{for}\ \  z\in\ov D\ \  \text{and}\ \ \zeta\in U\setminus D.
\eeq
\eq{b}
|r_\zeta\cdot(\zeta-z)|\geq c_2(r(\zeta)-r(z)+|\zeta-z|^2)\ \ \text{for}\ \ z\in \ov D\ \
 \text{and} \ \ \zeta\in U\setminus D.
\eeq
\eq{c}
|r_\zeta\cdot(\zeta-z)|\geq c_3|\zeta-z|^2\ \ \text{for}\ \ \zeta, z\in\pd D.
\eeq
\vskip0.05in

    Here $U$ is a  neighborhood of $\ov D$ which may not be the same for \eqref{C+} and \eqref{b}.
\eth

We obtain the following main results.
\th{gain1/2}Let $D\subset \cc^n$ be a bounded
domain of class $C^{1,1}$,
and suppose that $D$ has a
 $C^{1,1}$
defining function $r$ in a neighborhood $U$ of $\ov{D}$ with the property that
 \eq{strictconvU}
|r_\zeta\cdot(\zeta-z)|>0, \quad z\in D, \quad \zeta \in U\setminus D\, .
\eeq
Then there
exist   homotopy formulas
on $D$
\ga\label{htf}
\var=\db H_q\var+H_{q+1}\db\var\, ,\quad q=1,\ldots,
n-1;
\\
\var=H_0\var+H_{1}\db\var, \quad q=0
\label{ht0}
\end{gather}
for forms $\var$  of type $(0,q)$ satisfying the assumption that $\var$ and $\dbar\var$ have coefficients in $C^1(\ov D)$.

 Furthermore,
 if $D$ is strongly $\cc$-linearly convex, then for any $a\in(1,\infty)$ we have
\ga\label{theest}
|H_{q}\var|_{\Lambda^{a+1/2}(\ov D)}\leq C_a|\var|_{\Lambda^{a}(\ov D)}, \quad   \quad q\geq1,\\
|H_{0}\var|_{\Lambda^{a}(\ov D)}\leq C_a|\var|_{\Lambda^{a}(\ov D)}.
\label{thest++++}
\end{gather}
\eth
Here $\Lambda^\beta$ is the standard H\"older space when $\beta\in(0,\infty)\setminus \nn$, and  is the Zygmund space when $\beta$ is a positive integer. Note that  condition \eqref{strictconvU} is
trivially implied by e.g., condition
 \eqref{C+} in Theorem \ref{equiv-cond}.
 The operators $H_q$, $q=0, 1, \ldots n$, are defined in Proposition~\ref{hf},
where the homotopy formulas \eqref{htf} and \eqref{ht0} are obtained.  In the proof of the regularity estimates \eqref{theest} and \eqref{thest++++}
 it will be important to work with a
  particular
  defining function for $D$: it is well known that condition \eqref{c+0} and hence \eqref{c} are independent of the choice of defining function
 for $D$ in the sense that only the constants will be affected by the choice of $r$.
 In Section \ref{S:basics} we show that such stability is also satisfied by
conditions \eqref{C+} and \eqref{b}
 (see
 Lemma~\ref{leray-bound+}
 for the precise statement).
Condition  \eqref{b} for a specialized choice of  $r\in C^{1,1}(U)\cap C^\infty(U\setminus \ov D)$ is then needed to justify
the application
to such $r$ of the results~\cite{MR3961327}*{Propositions 4.4 and 4.10} which
 in turn give the estimates \eqref{theest}-\re{thest++++}.

 \vskip0.08in

A statement analogous to \rt{gain1/2} was proved by the first-named author~\ci{MR3961327} under the assumptions that the bounded domain $D$ is  strongly pseudoconvex and has boundary of class $C^2$. In \rt{gain1/2} we essentially {\em increase} the amount of
convexity to strong $\cc$-linear convexity, and {\em reduce} the amount of boundary regularity to the class $C^{1,1}$.

 \vskip0.08in

By employing condition \eqref{c+0} and adapting the method of proof of the classical $C^{1/2}$ estimate for strongly pseudoconvex domains of class $C^2$ (see~\cite{MR774049}*{
Theorem 2.2.2}), we also obtain
\pr{0gain1/2}Let $D\subset \cc^n$ be a bounded
weakly
$\cc$-linearly convex domain of class $C^{1,1}$.
There exists a homotopy formula on $D$
\eq{htf++++}
\var=\db T_q\var+T_{q+1}\db\var \, ,\quad q=1,\ldots,
n-1
\eeq
for forms  $\var$  of type $(0,q)$ when
  $\var$ and $\dbar\var$ have coefficients in $C^0(\ov D)$.
Furthermore,
  if $D$ is strongly $\cc$-linearly convex
  we have that
\eq{estTq}
|T_q\var|_{C^{1/2}(\ov D)}\leq C|\var|_{C^0(\ov D)},\quad q\geq1.
\eeq
\epr
Here $T_q$, $q=1,\ldots, n$ are the classical Leray-Koppelman operators \cite{MR1800297}*{p.~273}, which must be suitably interpreted when $D$ is merely of class $C^{1,1}$; see Section \ref{S:homotopy}
for the precise statements and the proofs.

 \vskip0.08in

Note that \rt{gain1/2} and \rp{0gain1/2} do not include data $\var$ of maximal type $(0, n)$ because such data can be
 treated with techniques already available in the literature.
Indeed it was observed by the first-named author~\ci{MR3961327} that if $\var$ has maximal type $(0,n)$ and $D$ is a bounded Lipschitz
domain, the solutions of $\db u =\var$ can be easily obtained  by extending $\var$ to a form with compact support in $\cc^n$. Here $\var$ is obviously $\db$-closed and no convexity of $D$ is required.
Then one obtains
solutions that gain one full derivative in  H\"older and Zygmund spaces.

 \vskip0.08in

\rt{gain1/2} and Proposition \ref{0gain1/2}
 effectively illustrate that
 from the point of view
   of the $\db$-problem with data in the H\"older-Zygmund spaces,
  strongly $\cc$-linearly convex $C^{1,1}$ domains behave like strongly pseudoconvex $C^2$ domains.
 On the other hand, this analogy may fail to hold for
 data taken from
  other functional spaces.
For instance, our proof of \rp{0gain1/2} does rely on the continuity of $\var$ and $\db\var$ up to $\ov D$;
in particular, we do not know whether $\dbar u=\var$ has an $L^\infty(D)$-solution when $\var$ is a $\db$-closed  form whose coefficients are merely in $L^\infty(D)$. The answer to such question would be positive if one knew that the closure of a strongly $\cc$-linearly convex domain $D$ of class $C^{1,1}$  can be exhausted by strongly pseudoconvex subdomains $\{D_j\}_j$ whose Levi forms are positive definite on the complex tangent spaces with bounds uniform in $j$, but
when $\pd D$ is merely $C^{1,1}$ it is not known whether $\ov D$ admits such an exhaustion; see
\rrem{lastrem}
in Section \ref{S:basics}.
It would be interesting to  understand the regularity of $\db$-solutions on a domain  whose  boundary is locally biholomorphic to  a strongly $\cc$-linearly convex $C^{1,1}$ real hypersurface
 (whose definition is embedded in the statement of ~\rp{hyper} below). However, it remains to be seen whether \rt{gain1/2}  extends to such domains.
\vskip0.08in

Finally, in the last section we derive ad-hoc estimates for
   the relevant counter-example in \cite{LSEx} indicating that some regularity of the $\dbar$-problem in the strong $\cc$-linearly convex category may persist below the class $C^{1,1}$;
 see Section \ref{S:example}
   for the precise statements.

 \vskip0.08in

The study of regularity of the solutions of the $\db$-problem  via integral representations has a long and rich history.
A detailed review of the existing literature may be found in ~\ci{MR847923} and, for the most recent results, ~\cite{MR3961327} and ~\ci{shi2019weighted}.
Here we briefly
recall that for smooth, strongly pseudoconvex domains the optimal
 $1/2$-estimate of
 \rp{0gain1/2} was achieved by Henkin-Romanov~\ci{MR0293121} for $\db$-closed
 forms
 after Grauert-Lieb~\ci{MR0273057}, Henkin~\ci{MR0249660}, Kerzman~\ci{MR0281944} proved that a $C^\beta$-estimate holds for any $\beta<1/2$. \rp{0gain1/2} for
 forms
  that are not
  necessarily
   $\db$-closed is due to Range-Siu~\ci{MR0338450}. The $C^{k+1/2}$ solutions for $\db u\in C^k$ were obtained by Siu~\ci{MR330515} for $q=1$ and by Lieb-Range~\ci{MR597825} for all degrees, and both require $\pd D\in C^{k+2}$ and $k\in\nn$.
The results in the aforementioned~\ci{MR3961327} were recently extended to weighted $L^p$ Sobolev spaces
by Shi~\ci{shi2019weighted}. A survey of the extensive literature on the solutions of the $\db$-problem with methods other than integral formulas may be found in e.g., Harrington~\cite{MR2491606}.
 \vskip0.08in

\noindent {\bf Acknowledgments.} Part of this work was carried out while the second-named author
was in residence at the Isaac Newton Institute for Mathematical Sciences during the program {\em Complex Analysis: techniques, applications and computations} (EPSRC grant no. EP/R04604/1).
We thank the Institute, and the program organizers, for the generous support and hospitality.
Finally, we are grateful to the referee for making several suggestions that have greatly improved the exposition.

\setcounter{thm}{0}\setcounter{equation}{0}
\section{More about $\cc$-linear convexity}\label{S:basics}

In this section we prove  Theorem \ref{equiv-cond}.  We will henceforth denote small positive constants by $c, c_1, c_*$, and large constants by $C,C_1,C_*$.    All these constants may depend on the choice of defining function
 $r$ for the domain $D$ when such an $r$ is involved. We will also deal with a neighborhood $U$ of $\ov D$ in which case the constants may also depend on $U$.

We recall the following stability property for the notion of strong (resp. strict) $\cc$-linear convexity, see also ~\cite{MR3250300}.
\begin{lemma}\label{r1r2} Let $r^{(j)}$, $j=1,2$,
be two defining functions for $D$ that are of class $C^1$ in neighborhoods $U_j$ of $\ov D$, $j=1, 2$. If $r^{(1)}$ satisfies condition
\eqref{cond-sC} $($resp., condition \eqref{c+0}$)$ with constant $c=c_1$, then
$r^{(2)}$ also satisfies condition \eqref{cond-sC} $($resp., condition \eqref{c+0}$)$ with constant $c=c_2$.
\end{lemma}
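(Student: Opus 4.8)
The plan is to observe that conditions \eqref{c+0} and \eqref{cond-sC} see a defining function $r$ only through its complex gradient $r_\zeta$ restricted to $\pd D$, and that along $\pd D$ the complex gradients of any two $C^1$ defining functions for $D$ differ by a positive continuous factor; the lemma then follows from compactness of $\pd D$.

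First I would recall the elementary fact that if $r^{(1)}, r^{(2)}$ are $C^1$ defining functions for $D$, then at each $\zeta\in\pd D$ the real gradients $\nabla r^{(1)}(\zeta)$ and $\nabla r^{(2)}(\zeta)$ are both nonzero outward normal vectors to the $C^1$ hypersurface $\pd D$; since the normal line is one-dimensional this forces
\[
\nabla r^{(2)}(\zeta)=h(\zeta)\,\nabla r^{(1)}(\zeta),\qquad \zeta\in\pd D,
\]
where $h(\zeta):=|\nabla r^{(2)}(\zeta)|/|\nabla r^{(1)}(\zeta)|$ is continuous and strictly positive on $\pd D$ (continuity and non-vanishing of $\nabla r^{(j)}$ near $\pd D$ being part of the definition of a $C^1$ defining function). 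Since the complex gradient $r_\zeta$ depends $\rr$-linearly on the real gradient of $r$, the same proportionality passes to the complex gradients:
\[
r^{(2)}_\zeta(\zeta)=h(\zeta)\,r^{(1)}_\zeta(\zeta),\qquad \zeta\in\pd D.
\]
(Alternatively one could write $r^{(2)}=a\,r^{(1)}$ for a positive continuous $a$ near $\pd D$ and note that on $\pd D$ the term $a_\zeta r^{(1)}$ appearing in $r^{(2)}_\zeta$ drops out; the gradient version has the advantage of never differentiating $a$, which matters here since $r^{(j)}\in C^1$ guarantees only $a\in C^0$.)

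Next, since $\pd D$ is compact and $h$ is continuous and strictly positive, there are constants $0<c_*\leq C_*<\infty$ with $c_*\leq h\leq C_*$ on $\pd D$, so that for $z\in\ov D$ and $\zeta\in\pd D$
\[
\bigl|r^{(2)}_\zeta\cdot(\zeta-z)\bigr|=h(\zeta)\,\bigl|r^{(1)}_\zeta\cdot(\zeta-z)\bigr|.
\]
Consequently, if $r^{(1)}$ satisfies \eqref{c+0} with constant $c_1$ then $|r^{(2)}_\zeta\cdot(\zeta-z)|\geq c_*c_1|\zeta-z|^2$, giving \eqref{c+0} for $r^{(2)}$ with $c_2=c_*c_1$; and if $r^{(1)}$ satisfies \eqref{cond-sC} then for $z\in\ov D\setminus\{\zeta\}$ the right-hand side above is a product of two strictly positive numbers, so \eqref{cond-sC} holds for $r^{(2)}$ as well.

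There is no real obstacle here; the only point to treat with a little care is that, the defining functions being merely $C^1$, the comparison factor ($h$, or $a$ above) is only continuous, so the argument must be organized so that it is never differentiated --- which is automatic in the gradient formulation above, since conditions \eqref{c+0} and \eqref{cond-sC} involve the defining function only through $r_\zeta|_{\pd D}$.
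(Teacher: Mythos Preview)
Your proof is correct and follows essentially the same approach as the paper: both arguments reduce to the identity $r^{(2)}_\zeta(\zeta)=h(\zeta)\,r^{(1)}_\zeta(\zeta)$ on $\pd D$ for a positive continuous $h$, from which the conclusion is immediate. The paper obtains this via the factorization $r^{(2)}=h\,r^{(1)}$ (citing Range, Lemma~II.2.5) and then restricts the differential to $\pd D$, whereas you go directly to the proportionality of the real gradients as outward normals; the two routes are equivalent, and you in fact mention the paper's version as an alternative.
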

\begin{proof}
 As is well known, see Range~\cite{MR847923}*{Lemma II.2.5}, there is a positive and continuous function $h: U_1\cap U_2\to \mathbb R^+$ such that
$$
{r^{(2)}}
(z) = h(z)\, {r^{(1)}}
(z),\quad z\in U_1\cap U_2\, ,\quad \text{and}
$$
$$
d{r^{(2)}}(\zeta) = h(\zeta)\, d{r^{(1)}}(\zeta),\quad \zeta\in U_1\cap U_2\cap \pd D.
$$
It follows from the second condition above, that
$$
|\,
 r^{(2)}_\zeta\cdot(\zeta-z)| = h(\zeta)\,| r^{(1)}_\zeta\cdot(\zeta-z)|\quad \text{for all}\quad \zeta\in \pd D\ \ \text{and}\ \  \text{for all}\ z\in\cc^n,
$$
giving the desired conclusion.
\end{proof}

Thus {\em any} defining function of a  strongly (resp. strictly) $\cc$-linearly convex domain will satisfy condition \eqref{c+0}
 (resp. \eqref{cond-sC}).
 In particular, by Whitney extension, any strongly (resp. strictly) $\cc$-linearly convex domain
admits a defining function $r\in C^\infty(\cc^n\setminus \pd D)\cap C^{1}(\cc^n)$ that satisfies \eqref{c+0} (resp. \eqref{cond-sC}). Furthermore, if the regularity of $D$ is improved to class $C^{1,1}$, then $D$ will admit a defining function $r\in C^\infty(\cc^n\setminus \pd D)\cap C^{1, 1}(\cc^n)$) that satisfies condition \eqref{c+0} (resp. \eqref{cond-sC}).
 \vskip0.08in

 {\em Proof of Theorem \ref{equiv-cond}}.\quad We first verify  that \eqref{C+} is independent of the choice of the $C^{1,1}$-defining functions by proving
 an equivalent condition. We will use the notation
  $$d(z)=\dist^{E}(z,\pd D)$$
  where $\dist^{E}$ stands for Euclidean distance in $\cc^n$.
 \le{leray-bound}Let $D$ be a bounded domain with a $C^{1,1}$ defining function $r$ defined on a neighborhood $U$ of $\ov D$.
 Then
\eqref{C+} is equivalent to
 \eq{C++}
  | r_\zeta\cdot(\zeta-z)|\geq c(d(\zeta)+d(z)+|\IM (r_\zeta\cdot(\zeta-z))|+|\zeta-z|^2), \quad {z\in \ov{D}}, \quad \zeta\in U\setminus D,
 \eeq
 for some constant $c$ depending on the $C^{1,1}$-norm of $r$.
\ele
The proof below will in fact show that \eqref{C+} is equivalent to the (seemingly weaker) condition
\eq{C++short}
 | r_\zeta\cdot(\zeta-z)|\geq \tilde{c}\,(d(\zeta)+d(z)+ |\zeta-z|^2), \quad {z\in \ov{D}}, \quad \zeta\in U\setminus D,
\eeq
but here we choose to state the Lemma with condition \eqref{C++} as it is this particular formulation that is most frequently stated in the literature.
\vskip0.1in

\noindent {\em Proof of Lemma \ref{leray-bound}.}\ \  The implication \eqref{C++} $\Rightarrow$ \eqref{C+} is trivial. Suppose now that $r$ satisfies \eqref{C+},
and let
$\zeta\in U\setminus D$ and $z\in \ov D$.
 Since
\ga\label{C00}
2| r_\zeta\cdot(\zeta-z)|\geq |\RE (r_\zeta\cdot(\zeta-z))| + |\IM (r_\zeta\cdot(\zeta-z))|,\\
\label{zeta-z}
|\zeta-z|\geq d(z)+d(\zeta),\\
c\leq \f{-r(z)}{d(z)}\leq C, \quad \text{and} \quad c\leq \f{r(\zeta)}{d(\zeta)}\leq C,
\label{zeta-z+}
\end{gather}
it suffices to show that \eqref{C+} implies
\eq{C+++}
|r_\zeta\cdot(\zeta-z)|\geq c\big(r(\zeta)-r(z)+|\zeta-z|^2\big)
\eeq
for a (possibly different) constant $c>0$.

Under condition \eqref{C+} it is enough to verify \re{C+++} when   $|\zeta-z|$ is small.  By
Taylor formula, which is applicable because $r\in C^{1,1}$, we have
 \eq{2REr}
2\RE (r_\zeta\cdot(\zeta-z))\geq r(\zeta)-r(z)-C_0|\zeta-z|^2.
\eeq
Note that $r(\zeta)-r(z)>0$. If
\eq{key-case}
2C_0|\zeta-z|^2<r(\zeta)-r(z)
\eeq
 we obtain
$$
\RE r_\zeta\cdot(\zeta-z)\geq( r(\zeta)-r(z))/2\geq (r(\zeta)-r(z)+c'|\zeta-z|^2)/4.
$$
 Thus $|r_\zeta\cdot(\zeta-z)|\geq( r(\zeta)-r(z))/2\geq (r(\zeta)-r(z)+c'|\zeta-z|^2)/4$, which is equivalent to \re{C+++} for a possibly different constant $c$.
If
\eq{trivial}
2C_0|\zeta-z|^2\geq r(\zeta)-r(z),
\eeq
we
obtain
 by \eqref{C+}
$$
|
r_\zeta\cdot(\zeta-z)|\geq c|\zeta-z|^2\geq c(|\zeta-z|^2+\f{c_0}{4}(r(\zeta)-r(z)))/2,
$$
which also gives us \re{C+++} for a possibly different $c$.
\qed.

\le{leray-bound+}Let $D$ be a bounded domain of class $C^{1,1}$.
Then condition \eqref{C++} is independent of the choice of the $C^{1,1}$-defining function for $D$.
\ele
Here and in what follows
 we may shrink the neighborhoods $U$ of $D$ in \rea{C++} and \rea{C+}.
\begin{proof}  Let $ r\in C^{1,1}(U)$ and $\tilde r\in C^{1,1}(\tilde U)$ be two defining functions for $D$,
 and suppose that $r$ satisfies \eqref{C++}. We need to show that $\tilde r$ also satisfies \eqref{C++}, or equivalently that
\eq{C+1} | \tilde r_\zeta\cdot(\zeta-z)|\geq \tilde
c(d(\zeta)+d(z)+|\zeta-z|^2), \quad z\in D, \quad \zeta\in \tilde U\setminus D,
\eeq
where $\tilde U$ is some neighborhood of $\ov D$;  see \eqref{C++short}.
 But \rl{leray-bound} shows that  \eqref{C++} and \eqref{C+} are equivalent,   thus $r$ satisfies \eqref{C+} as well,  and we will use the latter to show that
$\tilde r$ satisfies
 \eqref{C+1} (and hence \eqref{C++}).

 We start with  $\tilde r=hr$ with $h\in Lip(U')$ and $h\in C^{1,1}_{loc}(U'\setminus\pd D)$, where $ U'$ is an open neighborhood of $\pd D$.
 Suppose that $\zeta\in U'\setminus D$.
 We have
\eq{trhr}
\tilde r_\zeta=hr_\zeta+rh_\zeta.
\eeq
We may assume that $c_0<h<C_0$ and (by Rademacher Theorem) that  $|h_\zeta|<
C_1$ for some $C_0$ and $C_1$. Then
$$
\tilde r_\zeta\cdot(\zeta-z)=h(\zeta)r_\zeta\cdot(\zeta-z)+r(\zeta)h_\zeta\cdot(\zeta-z).
$$
Combining this with condition \re{C+} applied to $r$, we have
\eq{temp}
|\tilde r_\zeta\cdot(\zeta-z)|\geq c_1 |\zeta-z|^2-C_1r(\zeta)|\zeta-z|.
\eeq
(To be precise, we obtain the above inequality first at those points $\zeta\in U'$ where the Lipschitz function $h$ is differentiable, and then we extend to any $\zeta\in U'$ by the continuity of $\tilde r_\zeta$.)
From this inequality and \re{zeta-z}-\re{zeta-z+}, we see that by
possibly
 shrinking $U$,
it suffices to verify \re{C+1} when $|\zeta-z|$ and hence $r(\zeta)$
are  sufficiently small. Invoking the elementary inequality:
$$2ab\leq \ \del^2 a^2+\del^{-2}b^2\quad  \text{for any}\ \  \del>0,$$
we see that
  $$
  r(\zeta)|\zeta-z|\leq \epsilon
  |\zeta-z|^2+\e^{-1}|r(\zeta)|^2.
$$ Without loss of generality, we may choose to make $U'$ so small  that $ r(\zeta)<
c_*\e$ for $\zeta\in\tilde U$ for a suitable
small
 $c_*>0$, which we reserve to choose later.

Plugging the above in \eqref{temp}  we obtain
\al\label{trz}
|\tilde r_\zeta\cdot(\zeta-z)|&\geq( c_1-
C_1\e) |\zeta-z|^2-
C_1c_*r(\zeta)
\geq c_2 |\zeta-z|^2-
C_1'
c_*\tilde r(\zeta)\\
&\geq
c_2 |\zeta-z|^2-
C_1'
c_*(\tilde r(\zeta)-\tilde r(z))\nonumber
\end{align}
where the last inequality is due to the fact that $z\in D$, that is $\tilde r(z)<0$.

We also have
\eq{trz+}
2|\tilde r_{\zeta}\cdot(\zeta-z)|\geq
2\RE(\tilde r_{\zeta}\cdot(\zeta-z))
\geq \tilde r(\zeta)-\tilde r(z)-C_2|\zeta-z|^2
\eeq
where the second inequality is obtained by applying Taylor's theorem to $\tilde r$ (see the proof of
 Lemma \ref{leray-bound}
 for a similar argument),
and we may assume without loss of generality that $C_2>1$.  We now choose $c_*$ (and thus $U'$) so that $c_2/{(c_*
C_1'
)}>
8
C_2$.
This gives us $$ |\tilde r_{\zeta}\cdot(\zeta-z)|\geq\f{c_2}{4}|\zeta-z|^2+\f{
C_1'
c_*}{2}(\tilde r(\zeta)-\tilde r(z)).$$
Here the inequality is obtained by
 invoking \re{trz} if $c_2|\zeta-z|^2>2
C_1'
c_*(\tilde r(\zeta)-\tilde r(z))$ or if not, by invoking \re{trz+}
to
 obtain
\aln
2|\tilde r_{\zeta}\cdot(\zeta-z)|&\geq \f{1}{2}(\tilde r(\zeta)-\tilde r(z)) +\f{1}{2}(\tilde r(\zeta)-\tilde r(z))-{C_2}|\zeta-z|^2\\
&\geq \f{1}{2}(\tilde r(\zeta)-\tilde r(z)) +\f{c_2}{4
C_1'
c_*}|\zeta-z|^2-{C_2}|\zeta-z|^2
\\
&\geq \f{1}{2}(\tilde r(\zeta)-\tilde r(z))  +\f{C_2}{2}|\zeta-z|^2,
\end{align*}
which proves \re{C+1}.
\end{proof}
\co{corollary} Condition \eqref{C+} is independent of the choice of $C^{1,1}$ defining function.
\eco
\begin{proof}
Let $r$ and $\tilde r$ be two defining functions for $D$, and suppose that $r$ satisfies condition \eqref{C+}. We proceed by contradiction and suppose that $\tilde r$ does not satisfy condition \eqref{C+}. Then $\tilde r$ does not satisfy condition \eqref{C++} (Lemma \ref{leray-bound}); but this in turn implies that $r$ does not satisfy condition \eqref{C++} (Lemma \ref{leray-bound+}). It follows by Lemma \ref{leray-bound} that $r$ does not satisfy condition
\eqref{C+}, giving a contradiction.
\end{proof}
The proof of Theorem \ref{equiv-cond} now continues with the following
\pr{C11case}If  $D$ is a bounded, strongly $\cc$-linearly convex domain of class $C^{1,1}$,
 then condition \eqref{c+0} is equivalent to \eqref{C+}
 for    $r\in C^{1,1}$.
 \epr
 \begin{proof} It's clear that \eqref{C+} $\Rightarrow$ \eqref{c+0}; we need to show the implication:  \eqref{c+0} $\Rightarrow$ \eqref{C+} i.e., that
  \eq{cond0+}
  |
  r_\zeta\cdot(\zeta-z)|\geq c_0|\zeta-z|^2
 \eeq
 holds for some positive $c_0$, $z\in  \ov D$ and all $\zeta\in   U\setminus D$ for some neighborhood $U$ of $\ov{D}$. By the assumed strong $\cc$-linear convexity of $D$,
  i.e.
  \eqref{c+0}, we have that the above holds for $z\in D$ and $\zeta\in\pd D$ for a possibly different constant $c_0$.

 By the continuity of $r_\zeta$, it is clear that  \eqref{cond0+} holds when
  $U\setminus D$ is sufficiently ``narrow'', which we assume, and when
   $\zeta\in U\setminus D,z\in D$ and $|\zeta-z|>c$, where $c$ is any positive constant and $U$ depends on $c$. Without loss of generality, we assume that $|\zeta-z|<c_*$ for a small $c_*$ to be determined.

Let $\zeta\in U\setminus\ov D$, $z\in D$, where $|\zeta-z|$ is sufficiently small, and let $\zeta_*\in\pd D$ be such that $$\dist^E(\zeta, \pd D)=|\zeta-\zeta_*|\, .$$
Since $D$ is, in particular, of class  $C^1$, we have that the line through $\zeta$ and $\zeta_*$ is perpendicular to $T_{\zeta_*}^{\mathbb R}(\pd D)$, the tangent space to $\pd D$ at $\zeta^*$.
By a translation and a unitary change of coordinates, we may assume that
 $$\zeta_*=0, \quad \text{and}\quad \zeta=(0',-i\la), \quad \la>0.
$$
Thus the real tangent space $T_0^{\mathbb R}(\pd D)$ is defined by $y_n=0$ and near the origin $D$ is defined by
\eq{rhat}
\hat r=-y_n+R(z',x_n)<0, \quad R(0', 0)=0,\ \nabla R(0', 0)=\vec{0},
\eeq
where $z'\in\cc^{n-1}$.
With the above choice of coordinates, we can easily relate $\hat r_\zeta\cdot(\zeta-z)$ to its value at $\zeta=\zeta_*$.
Set $z=(z',x_n+iy_n)$; we have
 \eq{split}
\hat r_{\zeta}\cdot (\zeta-z)=\f{1}{2i}(x_n+i(\la+y_n))=\hat r_{\eta}\cdot(\eta-z)|_{\eta=0}+\f{1}{2}\la.
 \eeq
Thus
\gan
|\hat r_{\zeta}\cdot (\zeta-z)|=\f{1}{2}|x_n+i(y_n+\la)|,\\
|\zeta-z|^2=|z'|^2+|x_n|^2+|y_n+\la|^2.
\end{gather*}
By the hypothesis and \rl{r1r2}, we have that $$\f{1}{2}|x_n+iy_n|=\big|\hat r_{\eta}\cdot(\eta-z)|_{\eta=0}\big|\geq c_1|z|^2.$$
This shows that when $x_n,y_n$ are sufficiently small, we have
\eq{xns}
|x_n|+|y_n|\geq c_1|z'|^2.
\eeq
 We claim that it suffices to show
 \eq{nocase-}
 |x_n|+|y_n+\la|\geq c_*|z'|^2.
 \eeq
 Indeed, assuming the truth of \re{nocase-}, we see that it and \re{split} give us
 \aln
 |\hat r_\zeta\cdot(\zeta-z)|&=\f{1}{2}|x_n+i(\la+y_n)|\\
 &
 \geq \f{1}{4}(|x_n+i(\la+y_n)|+c_*|z'|^2/2)\geq c|\zeta-z|^2.\end{align*}
 This gives us the required estimate in terms of $\hat r$.  By
  \rl{leray-bound+},
  we obtain \eqref{C+}
  for a possibly different $U$.
 \vskip0.08in

We are left to prove \re{nocase-}. We first give a simple argument in the case when $D$ is strongly convex.
Indeed,  for such $D$  we have
$
y_n>0.
$
We immediately obtain
$$
|x_n|+|y_n+\la|=|x_n|+|y_n|+\la\geq|x_n|+|y_n|\geq c_1|z'|^2,
$$
where the last inequality follows from \re{xns}.

We now consider the general case, and again
use
the original assumption \re{c+0} in the local coordinate system.
For any $z'$ as above, we momentarily consider an auxiliary point $\tilde z := (z', \tilde x_n +i \tilde y_n)$ by setting $\tilde x_n :=0$ and $\tilde y_n :=R(z',0)$. Then $\tilde z$
is in $\pd D$. Thus by \re{c+0}, we get
$$
\f{1}{2}|\tilde y_n|=|\hat r_\zeta\cdot(\zeta-\tilde z)|_{\zeta=0}|\geq c_1|\tilde z|^2\geq c_1|z'|^2.
$$
This shows that
\begin{equation}\label{Eqn:1}
|R(z',0)|\geq 2c_1|z'|^2, \quad\forall z'.
\end{equation}
We next make the stronger claim that
\begin{equation}\label{Eqn:2}
R(z',0)\geq 2c_1|z'|^2, \quad\forall z'.
\end{equation}
On account of \eqref{Eqn:1} just proved, it suffices to show that
$$ 
R(z',0)\geq 0,\quad  \forall z'.
$$ 
To this end, take any point $\zeta_*\in\pd D$. Let $\hat z=L_{\zeta_*}(z)=U(\zeta_*)(z-\zeta_*)$ be the composition of a translation and unitary transformation such that $
L_{\zeta_*}(\zeta_*)=0
$
and $L_{\zeta*}D$ is defined by
$$
-\hat y_n+R(\hat z',\hat x_n;\zeta_*)<0,
$$
where $R(0;\zeta_*)=0$ and $\nabla R(0;\zeta_*)=0$.  At the origin the complex tangent space of $L_{\zeta_*}D$ is defined by $\hat z_n=0$, which is uniquely determined by our construction. If $\tilde U(\zeta_*)$ is another choice that yields a different $\tilde R$, we   have
$$ 
-\hat y_n+\tilde R(\hat  z',\hat  x_n;\zeta_*)=(-\hat y_n+ R(V\hat z', \hat x_n;\zeta_*))h,
$$ 
where $V$ is a unitary matrix and $h$ is a positive function. Therefore,
the sign function
$$
S(\zeta_*)=\sign R(\hat z',0;\zeta_*)\in\{-1,1\}
$$
is independent of the choices of $L_{\zeta_*}$ and $ z'$ when $ z'\neq0$ is sufficiently small.

  Locally,    we can choose $L_{\zeta_*}$ depending on $\zeta_*\in\pd D$ continuously.  This shows that  $S$ is  well-defined and continuous  on $\pd D$. Next, we find a boundary point $\hat \zeta$ such that $D$ is located on one side of the real tangent space of $\pd D$  at $\hat \zeta$. For such $\hat \zeta$,  it is clear that $R(z',0;\hat\zeta)<0$ cannot occur for small $z'$. Thus $S$ is positive on the component of $\pd D$ containing $\hat\zeta$. But $\pd D$ (the boundary of the bounded domain of holomorphy $D\subset \cc^n$ with $n\geq 2$) must be connected by Hartogs's extension. It follows that $S>0$, thus proving \eqref{Eqn:2}.
\vskip0.1in
Now by the assumed $C^{1,1}$ regularity of  $R$ and the Taylor remainder theorem, we obtain
$$
|R(z',x_n)-R(z',0)-(\pd_{x^n}R(z',x_n))|_{x_n=0}x_n|\leq Cx_n^2, \quad \forall z',x_n.
$$
We have $\nabla R(0)=0$. When $(z',x_n)$ is sufficiently small (recall that $|\zeta-z|$ is small), we obtain
\eq{Rz'}
|R(z',x_n)-R(z',0)|\leq \e|x_n|+Cx_n^2.
\eeq
Here $\e>0$ can be made small by assuming that $\zeta, z',x_n$ are sufficiently  small.
 Recall that
$$
y_n>R(z',x_n),
$$
{by}
\eqref{rhat}. From the latter together with \eqref{Eqn:2} and \eqref{Rz'}, we see that
\al\label{ynR}
y_n&>R(z',x_n)=R(z',0)+(R(z',x_n)-R(z',0))\\
&\geq c_1|z'|^2-C|x_n|^2-\e|x_n|\geq c_1|z'|^2-\e'|x_n|,
\nonumber
\end{align}
where $\e'$ can be arbitrarily small by assuming $\zeta,z$ to be sufficiently close. Since $\la>0$, then
\eq{xnynL}
|x_n|+|y_n+\la|\geq |x_n|+y_n+\la\geq |x_n|+c_1|z'|^2-\e'|x_n|\geq c_1|z'|^2.
\eeq
Thus \re{nocase-} holds.

The above gives us the estimate for $\hat r_\zeta\cdot(\zeta-z)$;
  invoking
  \rl{leray-bound+}
   one more time, we conclude that $|r_\zeta\cdot(\zeta-z)|\geq c_2|\zeta-z|^2$ for a possibly  different $U$, thus concluding the proof of Proposition \ref{C11case}.
\end{proof}

 The following result allows us to introduce  a notion of
 strongly $\cc$-linearly convex real {\em hypersurface}.
\pr{hyper}Let $D$ be a bounded domain with a $C^{1,1}$ defining function $r$.
Then \eqref{c+0} and hence \eqref{C+}  are equivalent to \eqref{c}$:$
$$
|r_\zeta\cdot(\zeta-z)|\geq c|\zeta-z|^2, \quad \forall\zeta, z\in \pd D.
$$
\epr

Of course, condition \re{c} alone cannot tell which side of $\pd D$ is the domain $D$: it is the additional assumption of boundedness of $D$ that determines $D$.

\begin{proof}
Note that Proposition \ref{hyper} is meaningful also in the 1-dimensional setting, that is for $D\Subset\cc $, but in this case its conclusion is obvious and so in what follows we assume that
$D\Subset\cc^n$ with $n\geq 2$.
The direction \eqref{C+} $\Rightarrow$ \eqref{c} is trivial. We prove the opposite direction: suppose that \eqref{c} holds. We first show that $D$ is strictly $\cc$-linearly convex, i.e. that $r_\zeta\cdot(\zeta-z)\neq0$ for $\zeta\in\pd D$, $z\in\ov D\setminus\{\zeta\}$, by \eqref{cond-sC}. Indeed, suppose for the sake of contradiction that $r_\zeta\cdot(\zeta-z)=0$ for some  $\zeta\in \pd D$ and $z\in D$
(note that case $z\in\pd D$ is ruled out by \re{c}).

  Then $
H(\zeta)
:=\{z\in D\colon r_\zeta\cdot(\zeta-z)=0\}$ is non empty.  Since $H(\zeta)$ is a bounded domain in the complex hyperplane $\{z\in\cc^n\colon r_\zeta\cdot(\zeta-z)=0\}$
and   $n\geq 2$, the boundary of $H(\zeta)$
 contains more than one point and
 is a subset of $\pd D$.
But
 $z\in \pd H(\zeta)\setminus \{\zeta\}$ gives $r_\zeta\cdot(\zeta-
 z )=0$, which contradicts the assumption \eqref{c}. Thus $D$ is strictly $\cc$-linearly convex.

Since $r_\zeta\cdot(\zeta-z)\neq0$ for $\zeta\in\pd D$ and $z\in\ov D\setminus \{\zeta\}$, by \eqref{cond-sC}, then for any $\del>0$
$$
\inf\left\{|\zeta-z|^{-2}|r_\zeta\cdot(\zeta-z)|\colon \zeta\in\pd D,z\in \ov D,|\zeta-z|\geq\del\right\}>0.
$$
We now proceed to prove \re{C+}: we may assume without loss of generality that $z$ is in a small neighborhood of $\zeta\in\pd D$.
As in the proof of \rp{C11case}, by a unitary transformation and translation, we may assume that $\zeta=0$ and that near the origin $D$ is defined by
$$
y_n>R(z',x_n), \quad R(0', 0)=0,\ \ \nabla R(0', 0)=\vec{0}.
$$
Let $z = (z', x_n+iy_n)$ with $z', x_n$ and $y_n$ as above, and again consider the auxiliary point
$\tilde z:=(z',i\tilde y_n)$ where $\tilde y_n:=R(z',0)$.
Then $z$
satisfies \re{Rz'} and \re{ynR}.
Proceeding as in the proof of \re{xnynL} we  find
$$
|x_n|+|y_n|\geq
|x_n|+y_n\geq c|z'|^2.
$$
For $\hat r=-y_n+R(z',x_n)$, we obtain
$$
|\hat r_{\zeta}\cdot(\zeta-z)|=\f{1}{2}|x_n+iy_n| \geq\f{1}{4}(|x_n+iy_n|+\f{1}{2}(|x_n|+|y_n|))\geq\f{1}{4}(|x_n+iy_n|+
{\frac{c}{2}}
|z'|^2).
$$
We have verified \re{c+0} and hence \eqref{C+}.
\end{proof}

In the sequel we will need the following version of the Whitney extension theorem.
 \pr{C11-wh} Let $k\geq0$ be an integer.
Let $D$ be a bounded domain with Lipschitz boundary. If $f\in C^{k,\beta}$ with $0\leq\beta\leq1$, then   there exists an extension $E_kf\in C^{k,\beta}(\rr^N)$ such that $E_kf=f$ on $\ov D$ and
\eq{dxktr}
|\pd_x^\ell E_kf|\leq C_{k,\ell}(1+\dist(x,\pd D)^{k+\beta-\ell}), \quad x\in\cc^n\setminus \ov D,\quad \ell=0,1,\dots.
\eeq
\epr
For a proof, see~\cites{MR0101294, MR0290095, MR3961327}. The proof in ~\ci{MR3961327} assumes that $f\in C^{k+\all}(\ov D):=
C^{k,\all}(\ov D)$ with $0\leq\all<1$, but the same argument works for $f\in C^{k,1}(\ov D)$.
 \vskip0.08in

\begin{rem}\label{lastrem}
While the work carried out in this section here will be sufficient to prove the main result Theorem \ref{gain1/2}, one would like to know whether
 the closure of a strongly $\cc$-linearly convex domain $D$ whose boundary is merely of class $C^{1,1}$ has a basis of pseudo-convex neighborhoods; and whether such $D$ can be exhausted by strongly $\cc$-linearly convex $C^{1,1}$ domains that are relatively compact in $D$: at present we do not know the answers to such questions.
 \end{rem}

\section{Homotopy formulas}\label{S:homotopy}
 \setcounter{thm}{0}\setcounter{equation}{0}
\label{sect:hf}

In this section, we first derive a homotopy formulas for $C^{1,1}$ domains $D$ that are {\em weakly}
$\cc$-{\em linearly convex}, see \eqref{cond-wsC},
 in terms of the classical Leray-Koppelman operators $T_q$ which however need to be properly interpreted here, since the classical construction of $T_q$ requires two continuous derivatives of the (any) defining function of $D$ and these, in our context, are not available. These are the operators that occur in Proposition \ref{0gain1/2}. We mention in passing that the notion of $\cc$-linear convexity is also meaningful for domains below the $C^1$-category (and in this context such notion is often referred to simply as
 ``linear convexity'')
 but here it is  of no import.
 The interested reader is referred to~\ci{MR2060426} for a detailed discussion of this more general setting.
 \vskip0.05in
We next derive a homotopy formula for
bounded $C^{1,1}$ domains admitting a defining function $r$ that obeys the stronger condition
\eqref{strictconvU} in a neighborhood $U$ of $\ov D$ which, on account of Theorem \ref{equiv-cond}, is satisfied e.g, by strongly $\cc$-linearly convex $D$. Here the homotopy formulas are given in terms of operators $H_q$ that are constructed in Proposition~\ref{hf} below.

\subsection{The
Leray-Koppelman
homotopy operators
and homotopy formulas for
 weakly
  $\cc$-linearly convex
 domains of class $C^{1,1}$.
}
Let $D$ be a  bounded domain defined by a $C^{1,1}$ function $r$ defined on a bounded, open neighborhood $U$
of $\ov D$.
We first assume that
$r$ satisfies \eqref{cond-wsC}, which we recall here:
$$
|r_\zeta\cdot(\zeta-z)|>0, \quad \zeta\in\pd D, \quad z\in D\, .
$$
  Set
\eq{defg1}
g^0(z,\zeta) :=\ov\zeta-\ov z\,;\quad
g^1(z,\zeta)
:=r_\zeta\, \quad \text{and}\quad
w=\zeta-z.
\eeq
Note that while
$g^1(\zeta,z)$ does not depend on $z$,  we maintain this notation to conform with the literature for Cauchy-Fantappi\`e forms.  In particular $g^1$ is (trivially) holomorphic in $z$.

Also let
$$
V :=D\times (U\setminus D)\, ,\quad \text{and}\quad S:=\{(z,\zeta)\in V\colon r_\zeta\cdot(\zeta-z)=0\}.
$$
 \vskip0.08in
Note that $S$ is a  {(possibly empty)}
closed subset of $V$, and that

\eq{not0}
  g^0\cdot w\neq0,\quad \zeta\neq z; \qquad g^1\cdot w\neq0\quad (z,\zeta)\in V\setminus S.
\eeq

   We formally define
     \ga
\omega^i\ :=\ \f{1}{2\pi i}\f{g^i\cdot dw}{g^i\cdot w}\, ;
\quad
\Omega^i
(z, \zeta)
\ :=\ \omega^i\wedge(\ov\pd\omega^i)^{n-1},\quad i=0, 1,\quad
\text{and}\label{Om1}\\
\Omega^{01}
(z, \zeta)
\ :=\omega^0\wedge\omega^1\wedge\sum_{\alpha+\beta=n-2}
(\ov\pd\omega^0)^{\alpha}\wedge(\ov\pd\omega^1)^{\beta}, \quad (\zeta,z)\in V\setminus S.
\label{Om01}
\end{gather}
Here the implied variable $(z, \zeta)$ is taken in $V$, and  the differentials $d$ and $\db$ are taken with respect to both $z$ and $\zeta$.

Note that since the components of $\nabla r$, and hence of $g^1$, are only Lipschitz,
the formal definitions of $\Omega^1(z, \zeta)$ and of $\Omega^{01}(z, \zeta)$ given above
 must be suitably interpreted.
More precisely, let
 $g^1_k$
 be a
 smooth approximation
  of
$g^1$, see \eqref{defg1}, where $k\in\mathbb N$ is sufficiently large so that  \eqref{not0} is true with $g^1_k$ in place of $g^1$. Hence \eqref{Om1} and \eqref{Om01}
 with $g^1_k$ in place of $g^1$ give meaningful notions of $\Om^1_k(z, \zeta)$ and $\Om^{01}_k(z, \zeta)$. The Rademacher Theorem now ensures that the limits as $k\to\infty$ of $\Om^1_k(z, \zeta)$ and $\Om^{01}_k(z, \zeta)$ exist for every $z\in D$ and a.e. $\zeta\in U_{K}
 \setminus D$
 and are in $L^\infty(K\times (U_{K}
 \setminus D))$ for any compact subset $K\subset D$,
 and we take $\Om^1(z, \zeta)$ and $\Om^{01}(z, \zeta)$  to be such limits.
 Here $U_{K}$ is an open set containing $\ov D$.
 We have the following representations:

  \eq{Omi}
\Om^{i}(z, \zeta)\, =\,
\f{1}{(2\pi \sqrt{-1})^n} \frac{g^i\cdot d\zeta}{g^i\cdot(\zeta-z)}\wedge\left(\f{\db g^i\dot{\wedge}\, d\zeta}{{g^i\cdot(\zeta-z)}}\right)^{n-1},
\eeq
where we have adopted the shorthand: $\db g^i\dot{\wedge}\,d\zeta:=\db g^i_1\wedge d\zeta_1+\cdots+\db g^i_n\wedge d\zeta_n$,
and

\eq{Om11}
\Om^{01}
(z, \zeta)\, =\,
 \f{1}{(2\pi \sqrt{-1})^n}  \sum_{\all+\beta=n-2}   \frac{g^0\cdot d\zeta \wedge(\db g^0
 \dot{\wedge}\,d\zeta)^{\all}}{(g^0\cdot (\zeta-z))^{\all+1}}
  \wedge \frac{ g^1\cdot d\zeta\wedge(\db g^1\dot{\wedge}\,d\zeta)^\beta}{(g^1\cdot (\zeta-z))^{\beta+1}}.
\eeq
 Furthermore, we decompose
\eq{deco}
\Om^i(z, \zeta)=
\sum\limits_{q=0}^{n-1}
\Om_{0,q}^i(z, \zeta),\quad \text{and}\quad
\Om^{01}(z, \zeta)=
\sum\limits_{q=0}^{
n-2
}
\Om_{0,q}^{01}(z, \zeta).
\eeq
Here both $\Om_{0,q}^i
(z, \zeta)
$ and $\Om_{0,q}^{01}
(z, \zeta)
$
have
  type $(0,q)$  in the variable $z$; on the other hand, in the variable $\zeta$ the
   type of $\Om_{0,q}^i
  (z, \zeta)
  $ is $(n,n-1-q)$, while $\Om_{0,q}^{01}
  (z, \zeta)
  $ has type $(n,n-2-q)$. And we have set $\Om_{0,-1}^1
  (z, \zeta)
  :=0$ and
   $\Om^{01}_{0,-1}
   (z, \zeta)
   :=0$.
The previous argument gives that each  term
$\Om_{0,q}^i(z, \zeta)$ and
$\Om_{0,q}^{01}(z, \zeta)$
in the decompositions \eqref{deco} is in $L^\infty(K\times (U_K
\setminus D))$ for any compact set $K\subset D$.
  \vskip0.08in

   Next we formally define the {\em Leray-Koppelman operators}:
   \eq{Kop}
   T_q\var (z) :=-\!\!\!\!\int\limits_{\zeta \in \pd D}\!\!\!\Om_{0,q-1}^{01}(z,\zeta)\wedge \var (\zeta)+
  \!\! \int\limits_{\zeta \in D}\!\!\!\Om_{0,q-1}^0(z,\zeta)\wedge \var (\zeta), \  \ q=1,\ldots,n
   \eeq
   where $\var$ is a form of type $(0, q)$ whose coefficients are continuous on $\ov D$.
 In giving this definition we face a new conceptual difficulty again stemming from the hypothesis that
 $r$ is only of class $C^{1,1}(U)$: the Rademacher theorem grants that the second-order derivatives of $r$ are in $L^\infty(U)$, hence
 $\pd^2 r|_{\pd D}$ may be undefined on $\pd D$ as the latter has Lebesgue measure 0 in $\cc^n$. Thus the boundary integral in \eqref{Kop} is, in principle, problematic. However one can  show that
\eq{def-it}
\int_{\zeta\in \pd D}\Om^{01}_{0,q-1}(z, \zeta)\wedge\var(\zeta)
\eeq
is nonetheless meaningful. This can be verified e.g., by expressing $\pd D$ as the graph of a function $\psi\in C^{1,1}(\rr^{2n-1})$ and employing the argument in ~\cite{MR3250300} which we briefly recall here.
Using a partition of unity, we may assume that the coefficients of $\var$ have compact support in a small neighborhood $V$ of $\zeta_0\in\pd D$. On $V$, we assume that
$\pd D$ is given by
$$
y_n=\psi(z',x_n),
$$
where $\psi\in C^{1,1}(\mathbb R^{2n-1})$.
We take $r:=-y_n+\psi(z',x_n)$ and   invoke the $C^{1,1}$-regularity of $\psi$ to get a sequence of smooth functions
 $\{\psi^{(k)}\}_k$ that converges to $\psi$ in the $C^1$ norm, while $\pd^2\psi^{(k)}$ are uniformly bounded
  and, furthermore,
   $\pd^2\psi^{(k)}(\zeta)\to \pd^2\psi (\zeta)$ as $k\to\infty$, whenever $\zeta$ is
 a Lebesgue point  of
  $\pd^2\psi$.
  \vskip0.08in

 We now work with the following defining function for $D$:
\eq{goodr}
r :=-y_n+\psi(z',x_n)
\eeq
and with its smooth approximates
\eq{goodrk}
  r^{(k)}:=-y_n+\psi^{(k)}(z',x_n).
\eeq

We define the form $g^1$ and smooth approximations $\{g^1_k\}_k$ using this choice of $r$.
This gives an approximation of $\Om^{01}(z, \zeta)$ by smooth forms $\Om^{01}_k(z, \zeta)$
which admit a decomposition analogous to \eqref{deco} as a sum of $(0,q)$ forms
 $\Om^{01}_{(0,q)\, k}(z, \zeta)$ with smooth coefficients. Proceeding as in ~\cite{MR3250300} (Rademacher theorem in the variables $(z', x_n)\in\mathbb R^{2n-1}$) one can show that
   each coefficient of
 $\Om^{01}_{(0,q)\, k}(z, \zeta)$ converges a.e. $\zeta\in\pd D$ to a limit
 which is in
 $L^\infty(K\times \pd D)$ for any compact subset $K\subset D$,
 which must agree with (the corresponding coefficient of) the limit
$\Om_{0,q}^{01}(z, \zeta)$
that was previously determined. In short, we have that
\eq{Okq}
\Om_{0,q}^{01}(z, \zeta)\in
 L^\infty(K\times(U_K
 \setminus D))\cap
   L^\infty(K\times \pd D)
   \eeq
   for any compact subset $K\subset D.$
It follows that
$$
\int\limits_{\zeta\in \pd D}\Om^{01}_{(0,q-1)\, k}(z, \zeta)\wedge\var (\zeta)\  -
\int\limits_{\zeta\in \pd D}\Om_{0,q-1}^{01}(z,\zeta)\wedge\var (\zeta)\to 0
$$
uniformly on the compact subsets of $D$
as $k\to\infty$.
This shows that \eqref{def-it}, and hence $T_q$ is indeed well-defined for $r$ as in \re{goodr}.
   \vskip0.08in

   The above arguments also show that the conclusions of {\em Koppelman Lemma}:
  \ga
\label{kop1}\db_\zeta\Om_{0,q}^1
(z, \zeta)
+\db_z\Om_{0,q-1}^1
(z, \zeta)
=0,   \quad
q =0,\ldots,
n-1
\\
\label{kop2}\db_\zeta\Om^{01}_{0,q}
(z, \zeta)
+\db_z\Om_{0,q-1}^{01}
(z, \zeta)
=\Om_{0,q}^0
(z, \zeta)
-\Om_{0,q}^1
(z, \zeta),
\quad
q =0,\ldots,
 n-1
\end{gather}
are valid for our choice of $\Om_{0,q}^i,\  i=0, 1$ and $\Om^{01}_{0,q}$, $q=0,\ldots, n-1$, for
\eq{Sk-}
 (z,\zeta)\in V\setminus S= D\times (U\setminus D)\setminus\{(z,\zeta)\colon r_\zeta\cdot(\zeta-z)=0\}.
\eeq
Indeed,
by the classical Koppelman lemma~\cite{MR1800297}*{p.~263}, identities \eqref{kop1} and
\eqref{kop2} are  valid for $\Om^i_{(0, q)\, k}(z, \zeta), i=0, 1$ and $\Om^{01}_{(0, q)\, k}(z, \zeta)$ for any $k\in\mathbb N$ and
\eq{Sk}
(z,\zeta)\in V\setminus
S_k
:= D\times (U\setminus D)\setminus\{(z,\zeta)\colon r^{(k)}_\zeta\cdot(\zeta-z)=0\}.
\eeq
 Taking the limit as $k\to\infty$ we obtain \eqref{kop1} and \eqref{kop2}.
 \vskip0.08in

\le{Tq-stable}  Let $D\subset\cc^n$ be a bounded,
weakly $\cc$-linearly
 convex domain of class $C^{1,1}$. The definition of the Leray-Koppelman homotopy operators $T_q$,
i.e.
 \eqref{Kop},
 is independent of the choice of
 $C^{1,1}$
 defining function for $D$.
\ele
\begin{proof} Recall that $T_q\var$ was defined via a partition of unity for $\ov D$ and local graph defining function of $\pd D$. To verify that $T_q\var$ is independent of the choice of $r\in C^{1,1}$, we again use a partition of unity and the above local defining function. For any defining function $\tilde r\in C^{1,1}$ of $D$,
using \re{goodr}
we have
$$
\tilde r=hr, \quad \tilde r_\zeta=hr_\zeta, \quad \zeta\in\pd D.
$$
Here $h\in Lip$. We now compute $\Om^{01}$ defined by \re{Om11} with $g^1= r_\zeta$ being replaced by $\tilde g^1=\tilde r_\zeta$. We first note that since $\tilde r
\in C^{1,1}$  then in particular $\tilde r|_{\pd D}\in C^{1,1}$ and hence
$d(\tilde g^1\cdot d\zeta)\in L^\infty(\pd D)$;  see~\cite{MR3250300}*{Section~2}. Thus  $T_q\var$ is well-defined when $g^1$ is replaced by $\tilde g^1$.  With $h\in Lip$ additionally, we conclude that
on $\pd D$,
$$
\tilde g^1\cdot d\zeta\wedge (d_\zeta(\tilde g^1\cdot d\zeta))^\all=hg^1\cdot d\zeta\wedge (d_\zeta(hg^1\cdot d\zeta))^\all=h^{\all+1}g^1\cdot d\zeta\wedge (d_\zeta(g^1\cdot d\zeta))^\all.
$$
 We can also verify that the above expressions have $L^\infty$ coefficients,  and the identities hold
 in the sense of distributions. Approximating $\var\in C^0$ by $C^1$ functions, we conclude that $T_q\var$ is independent of the $\tilde r$. \end{proof}

\pr{hft}Let $D$ be a
bounded weakly
$\cc$-linearly
 convex domain with a defining function $r\in C^{1,1}$. Then
on $D$
\ga\label{chtf--}
\var(z)=\db_zT_q\var+T_{q+1}\db_z\var,  \quad 1\leq q\leq n,\\
\var(z)=\int_{\pd D}\Om_{0,0}^1\var +T_1\db \var,
\quad q=0,
\label{lerayf-}
\end{gather}
 hold for
for any $(0,q)$-form $\var\in C^0(\ov D)$ whose
distributional derivatives $\db \var$ on $D$ extend to a form whose coefficients are  in $C^0(\ov D)$.
\epr

\begin{proof} By the Whitney extension theorem, we may assume that $r\in  C^{\infty}(U\setminus\pd D)$.
By \rl{r1r2}, we have that such an $r$ satisfies condition \eqref{cond-sC}:
$$
 |r_\zeta\cdot(\zeta-z)|>0, \quad \zeta\in\pd D, \quad z\in
  D.
 $$
 Let $D'$ be any relatively compact subdomain of $D$.  We get   from the above that
 \eq{rz>0+}
 |r_\zeta\cdot(\zeta-z)|>c, \quad \zeta\in\pd D, \quad z\in D'.
 \eeq
We now define
$$D_j:= \{r<-{c'}
/j\}\quad \text{where}\ \ {c'}
 \ \ \text{is a small positive number}.
$$
Note that $D_j$ is relatively compact in $D$ and $D_j$ increase to $D$ as $j\to\infty$. Thus we may henceforth assume that $D'$ is relatively compact in each $D_j$ (for $j>j_0$),   and that \re{rz>0+} holds for $z\in D'$ and $\zeta\in\pd D_j$.

 As before, we take a sequence of smooth functions $\{r^{(k)}\}_k$ that tend to $r$ in the $C^1(\ov{D})$-norm, while $\nabla^2r^{(k)}$ are uniformly bounded, and $\partial^2 r^{(k)} \to  \partial^2r$ pointwise a.e. on a neighborhood of $\ov{D}$.
 Thus, replacing $r$ in \re{rz>0+} with $r^{(k)}$, we have
\eq{rkD'}
|r^{(k)}_\zeta\cdot(\zeta-z)|>0, \quad \zeta\in\pd D_j, \quad z\in D'.
\eeq

 We next let $\var_\e\in C^{\infty}(\ov D_j)$ be such that
 $\db\var_\e- \db\var$
tends to $0$ in {the}
sup norm on $\ov D_j$ as $\e\to0$, on account of the assumption that $\var\in C^0(\ov D)$
and $\db\var$ on $D$ extends to a continuous form on  $\ov D$.

 Let us consider the case of $q\geq1$. Assume that $\var$ and $\db\var$ are in $C^0(\ov D)$.  We first recall
 the Bochner-Martinelli-Koppelman formula for $C^1$ domains~\ci{MR1800297}*{Theorem 11.1.2}:
 \eq{BM-}
 \var(z)=\db_z\int_D\Om_{0,q-1}^0(z,\zeta)\wedge\var(\zeta)+\int_D\Om_{0,q}^0(z,\zeta)\wedge\db\var+\int_{\pd D}\Om_{0,q}^0(z,\zeta)\wedge\var(\zeta).
 \eeq
Fix $z\in D'$.
Applying \re{rkD'}
 to $D_j$ with $(z, \zeta)$ as in ~\re{Sk}, we obtain via
an (implicit) analog of \re{kop2}
\aln
\int_{\pd D_j}\Omega^1_k(z,\zeta)\wedge\var_\e(\zeta)&=\int_{\pd D_j}\left\{\db_\zeta\Omega_{(0,q),k}^{01}\right\}\wedge
\var_\e+\int_{\pd D_j}\left\{\db_z\Omega_{(0,q-1),k}^{01}\right\}\wedge
\var_\e\\
&=-\int_{\pd D_j}\Omega_{(0,q),k}^{01}\wedge\db_\zeta
\var_\e-\db_z\int_{\pd D_j}\Omega_{(0,q-1),k}^{01}\wedge
\var_\e.
\end{align*}

Note that $\db(\var_\e)=(\db\var)_\e$ on $D_j$ when $0<\e<\e_j$ for a sufficiently small positive $\e_j$.
Thus we have
$$
\var_\e=\db T^{j,k}_q\var_\e+T^{j,k}_q
(\db\var)
_\e,\quad \text{on $D'$ and for $\e<\e_j$}
$$
where $T_q^{j,k}$ is the Leray-Koppelman operator \eqref{Kop}
associated with $D_j$ and $r^{(k)}$, given by
 $$
T_q^{j,k}\var (z) =-\!\!\!\!\int\limits_{\zeta \in \pd D_j}\!\!\!\Om_{(0,q-1)\,, k}^{01}(z,\zeta)\wedge \var (\zeta)+
  \!\! \int\limits_{\zeta \in D_j}\!\!\!\Om_{(0,q-1), \, k}^0(z,\zeta)\wedge \var (\zeta), \  \ q=1,\ldots,n.
  $$
We first let $\e$ tend to $0$ and then we
let  $j$ tend to $\infty$. Then on $D'$ we have
$$
\var=\db T^{k}_q\var+T^{k}_q\db\var$$
 where
  $$
   T^k_q\var (z) :=-\!\!\!\!\int\limits_{\zeta \in \pd D}\!\!\!\Om_{(0,q-1),\,k}^{01}(z,\zeta)\wedge \var (\zeta)+
  \!\! \int\limits_{\zeta \in D}\!\!\!\Om_{(0,q-1)\,,k}^0(z,\zeta)\wedge \var (\zeta), \  \ q=1,\ldots,n.
  $$
Now the argument that
was used to define the quantity
\re{def-it} shows that $T_q\var$ is well-defined by taking $k\to\infty$
 in the expression above. We have obtained
\re{chtf--}.
The proof for \re{lerayf-}
 follows a similar strategy.
\end{proof}

\subsection{A new
 homotopy formula  for
 bounded strongly $\cc$-linearly convex $C^{1,1}$ domains}

We now prove the main result of this section by deriving a homotopy formula for a domain $D$ admitting a $C^{1,1}$ defining function $r$ satisfying
\re{strictconvU},
which we recall says
$$
r_\zeta\cdot(\zeta-z)\neq0, \quad\forall \zeta\in U\setminus D, z\in D.
$$

Note that by \rp{C11case},
this condition
is weaker than \re{C+}
and follows from the strong $\cc$-linear convexity of $D\in C^{1,1}$.
  \pr{hf}
Let $D\subset\cc^n$ be a  domain  with a defining function  $r$
which is of class  $C^{1,1}$ in a neighborhood $U$ of $\ov D$.  Let $g^0(z,\zeta)=\ov\zeta-\ov z$.  Let $g^1(z,\zeta)=r_\zeta$  satisfy
condition
\rea{strictconvU}.
 Let $\var$ be  a $(0,q)$-form in $\ov D$. Suppose that $ \var$ and $\db\var$ are in $C^1(\ov D)$. Then in $D$
\ga\label{tsqf}
\var=\db H_q\var+H_{q+1}\db\var, \quad 1\leq q\leq n,\\
\label{tsqf+}
\var=H_0\var+H_1\db \var, \quad q=0,
\intertext{with}
\label{Hqv}
H_q\var:=\int_{U}\Om_{0,q-1}^0\wedge E\var+\int_{U\setminus D}\Om_{0,q-1}^{01}\wedge[\db,E]\var,  \quad q>0,\\
H_0\var:=\int_{\pd D}\Om_{0,0}^1\var-\int_{U\setminus D}\Om_{0,0}^1\wedge E\db \var=\int_{U\setminus D}\Om_{0,0}^1\wedge [\db, E] \var.
 \label{H0f}
\end{gather}
Here, $E$ is an operator that extends $ \var$ and $\db\var$ to $C^1(U)$.
\epr
\begin{proof} The extension operator  $E$ was constructed in ~\ci{MR0290095}, where it was defined for functions. Here we define $E\var$ by applying the original operator $E$ component-wise to the coefficients of $\var$, which results in a form of the same type. We may assume without loss of generality that $E\var $ has compact support in  $U$, by using cut-off functions. Thus $E\var,E\db \var$ are in $C^1(U)$ and $[\db,E]\var, [\db,E]\db\var=\db E\db\var$ are in $C^0(U)$.

 Let us consider the case: $q\geq1$. We  recall again the Bochner-Martinelli-Koppelman formula \re{BM-}:
 \eq{BM}
 \var(z)=\db_z\int_D\Om_{0,q-1}^0(z,\zeta)\wedge\var(\zeta)+\int_D\Om_{0,q}^0(z,\zeta)\wedge\db\var+\int_{\pd D}\Om_{0,q}^0(z,\zeta)\wedge\var(\zeta).
 \eeq
Assume that $\var$ and $\db\var$ are in $C^1(\ov D)$.
 We want to rewrite the boundary integral in \re{BM}. Fix a relatively compact subdomain $D'$ of $D$ and let $z$ vary in $D'$. Then we have by \re{kop1}-\re{kop2} applied to $g^1(\zeta,z)=r^{(k)}_\zeta$,
\aln
\int_{\pd D}\Om_{0,q}^0(z,\zeta)\wedge\var(\zeta)&=\int_{\pd D} \db_z\Om_{(0,q-1),k}^{01}(z,\zeta)\wedge\var(\zeta)+\int_{\pd D}\db_\zeta\Om_{(0,q),k}^{01}(z,\zeta)\wedge\var(\zeta)\\
&={\ov\pd_z}
\int_{\pd D} \Om_{(0,q-1),k}^{01}(z,\zeta)\wedge\var(\zeta)+\int_{\pd D}\Om_{(0,q),k}^{01}(z,\zeta)\wedge\db_\zeta\var(\zeta).
\end{align*}

By
 \ci{MR3961327}*{(2.12)},  we have for $z\in D$ and $\var\in C^1(\ov  D)$,
\al\label{keyidsim}
&-\db\int_{\zeta\in\pd D}\Om_{(0,q-1),k}^{01}( z,\zeta )\wedge \var(\zeta)+\db \int_{\zeta\in D}\Om_{0,q-1}^{0}( z,\zeta )\wedge \var(\zeta)
 \\
 &
 \nonumber \qquad\qquad=\db\int_{U\setminus D}
\Om_{(0,q-1),k}^{01}(z,\zeta)\wedge\db E\var(\zeta) +
\db\int_{U}
\Om_{0,q-1}^{0}(z,\zeta)\wedge E\var(\zeta).
\end{align}
When $\db\var\in C^1(\ov D)$,
 proceeding as in the proof of  ~\ci{MR3961327}*{(2.13)} we employ  ~\ci{MR3961327}*{(2.11)} to obtain
\al\label{keyidsim+} -\int_{\pd D}\Om_{(0,q),k}^{01}\wedge\db \var &+\int_D\Om_{0,q}^{0}\wedge \db \var
=\int_{U\setminus D}
\Om_{(0,q),k}^{01}\wedge\db E\db \var
\\
&- \db
\int_{  {U}  \setminus D}
\Om_{(0,q-1),k}^{01}\wedge E\db \var-\int_{  { U}  \setminus D}
 \Om_{(0,q),k}^{1}\wedge E\db \var \nonumber\\
 & +\int_{  {U}  \setminus D}
\Om_{0,q}^{0}\wedge E\db\var +\int_{D }
\Om_{0,q}^{0}\wedge \db \var.
\nonumber  \end{align}
On the right-hand side, the first term can be written via the commutator as
 $\db E\db \var =(\db E-E\db)\db \var$.
 Since $q\geq1$,  the third term  is zero. The second term, when combined with the first  term  on the right-hand side of \re{keyidsim},
 gives us the desired commutator for $\var$.    Adding   \re{keyidsim}-\re{keyidsim+} yields \re{tsqf} in which $g^1$ is $r^{(k)}_\zeta$.
 Letting $k\to\infty$, we obtain \eqref{tsqf} on $D$.

This completes the proof of \eqref{tsqf}. The proof of \eqref{tsqf+} (that is the case: $q=0$) follows a similar strategy.
\end{proof}
We mention that the commutator $[\db, E]$  was first employed by Peters~\ci{MR1135535}, where $E$ is the Seeley extension when $\pd D$ is sufficiently smooth. The commutator has been useful in the construction of homotopy formulas in other settings; for instance, see Michel~\ci{MR1134587} and Michel-Shaw~\ci{MR1671846}.

When $\pd D\in C^2$ is strongly pseudoconvex, the statement analogous to \rp{hf} has been proved recently by Shi~\ci{shi2019weighted} when $\var,\db\var$ are in the Sobolev space $W^{1,1}(D)$.
 \vskip0.08in

\begin{proof}[Proofs of \rta{gain1/2} and \rpa{0gain1/2}]  The inequality \re{dxktr}, where $f$ is replaced by $r|_{\ov D}$,   ensures that $W(\zeta,z):= r_\zeta(\zeta)$ with $r:=Ef$ is a regularized Henkin-Ramirez function in the sense of
 ~\cite{MR3961327}*{Definition~4.1} where the requirement that $W\in C^1$ (see condition (i.) in ~\cite{MR3961327}*{Definition~4.1}) is relaxed to $W$ being Lipschitz.   Note that conditions \eqref{C++} and \eqref{dxktr} give that $\Phi(\zeta,z):=r_\zeta\cdot(\zeta-z)$ satisfies the hypotheses of ~\cite{MR3961327}*{Propositions 4.4 and 4.7}; see
 ~\cite{MR3961327}*{(4.4) and (4.5)}. Thus conclusions
  \re{theest} and  \re{thest++++} follow from ~\cite{MR3961327}*{Propositions 4.4 and 4.7}, respectively. With the same reasoning,  the proof of the well-known $1/2$-estimate of Henkin-Romanov~\cite{MR774049}*{Theorem 2.2.2} for strictly pseudoconvex $C^2$ domains is valid for \re{estTq} for our $C^{1,1}$ domain.
\end{proof}

\section{Below the $C^{1,1}$ category: one example}\label{S:example}
Let us  give an example of $C^{1,\all}$ domain which satisfies \eqref{C+}.
\begin{exmp}
\label{exA}
Consider  for $x\in\rr^n$
$$
f(x)=|x|^m, \quad 1<m<\infty.
$$
We claim
that for a given $C_0>0$
\eq{f(x)}
f(y)-f(x)-
\nabla f(x)\cdot
(y-x)\geq c|y-x|^{\max(m,2)} \ \  \text{whenever}\ \  |x|+|y|<C_0.
\eeq

Indeed, note that the estimate \eqref{f(x)} is trivial when $x$ or $y$ is the origin, so in the sequel we assume that  neither of $x,y$ is the origin.   We have
\aln
f(y)-f(x)-
\nabla f(x)\cdot
(y-x)&=|y|^m-|x|^m-m|x|^{m-2}x\cdot(y-x)\\
&=|y|^m+(m-1)|x|^m-m|x|^{m-2}x\cdot y.
\end{align*}
Thus the estimate with $f(y)-f(x)-
\nabla f(x)\cdot
(y-x)\geq c|y-x|^m$ holds trivially when $|x|<c_m|y|$, or $|y|<c_m|x|$, or $x\cdot y\leq0$.
Hence by dilation, we may assume that  $|x|=1$, $c<|y|<C$ and $x\cdot y\geq0$. By rotation, we know that the real Hessian of $|x|^m$ is positive definite at any point $x\neq0$. Consequently the restriction of $|x|^m$  to
 any real line segment $\theta\in[0,1]\mapsto (1-\theta)x+\theta y$
with
positive distance to the origin is a strictly convex smooth function of $\theta$.
\vskip0.08in

 Moreover, for $c_m<|y|<C_m$; $|x|=1$,  and $x\cdot y\geq0$, we have
$$
|(1-\theta)x+\theta y|^2=(1-\theta)^2|x|^2+\theta^2|y|^2+2\theta(1-\theta)x\cdot y\geq (1-\theta)^2|x|^2+\theta^2|y|^2\geq \f{|x|^2|y|^2}{|x|^2+|y|^2}.
$$
This shows that   $g(t):=|(1-t)x+ty|^m$ has $g''(t)\geq c_m,$ and thus $g(1)-g(0)-g'(0)\geq c_m/2$. This gives us that $$f(y)-f(x)-\nabla f(x)\cdot(y-x)\geq \tilde c_m|y-x|^2/2$$
and concludes the construction of Example \ref{exA}.
\end{exmp}
\noindent As a consequence of the above,
 given any collection $\{m_1,\ldots, m_k\}$ with $m_j>1, j=1,\ldots, k$, and setting
$$
f(x) :=\sum_{j=1}^k a_j|x^j|^{m_j}, \qquad x^j\in\rr^{n_j}, \quad a_j>0,\  |x^j|<C_0,
$$
where $x=(x^1,\dots, x^k)  \in\rr^{n_1+\cdots+n_k}$,
we have
$$
f(y)-f(x)-\nabla f(x)\cdot
(y-x)\geq c_m|y-x|^{\max(m_1,\dots, m_k,2)}.
$$
Here  $c_m$ depends only on $a_1,\dots, a_m$ and $C_0$.
In particular, if
\eq{convexr}
r(z) :=|x_1|^{m_1}+|y_1|^{m_2}+\cdots+|x_n|^{m_{2n-1}}+|y_n|^{m_{2n}},
\eeq
then
$$
r(z)-r(\zeta)-2\RE (r_\zeta\cdot(z-\zeta))\geq c|\zeta-z|^{\max(m_1,\dots, m_{2n},2)}, \quad |\zeta|+|z|<C,
$$
where $c$ depends on $m_1,\dots, m_{2n},C$.
Assume further that  $r(z)\leq r(\zeta)$. We obtain
$$
2\RE (r_\zeta\cdot(\zeta-z))\geq c'|\zeta-z|^{\max(m_1,\dots m_{2n},2)}.
$$
If $1<m_j\leq 2$ for all $j$, then the domains $\{r<C\}$ are strongly  $\cc$-linearly convex
 and of class $C^{1,\alpha}$ with $\alpha:= \min\{m_1, \ldots, m_{2n}\}-1$, when $C>0$. In fact condition \re{C+} holds  for such domains.

The above shows that the function $r$ in \re{convexr} is a convex function.  Thus a theorem of Dufresnoy~\ci{MR526786}  gives the following.
\pr{Dufresnoy} Let $q>0$ and let $r$ be given by \eqref{convexr}.  Suppose that $f\in C^\infty(\ov D)$ is
a $\dbar$-closed $(0,q)$ form on
$D:=\{r<c\}$. Then there is a solution $u\in C^{\infty}(\ov D)$ to $\dbar u=f$ on $D$.
\epr

On the other hand, for the above examples and, more generally for any $C^{1,\all}$ strongly $\cc$-linearly convex domain with $0<\all<1$, our integral formulas-based method for the solution of the $\ov\pd$-problem in the H\"older or Zygmund space category (in fact, {\em any} function space!) is conceptually problematic. This is because  the Leray-Koppelman homotopy operators $T_q$ cannot be made meaningful, as the components of $\nabla^2 r$ are not Lipschitz and there is neither a global, nor a tangential analog of the Rademacher Theorem that can be applied in this context.
 \vskip0.08in

\begin{rem}
One can check that condition \eqref{f(x)}
 is also valid for
$$
f(x) :=|x|^m g(x), \quad g\in C^{1,1}(\rr),\quad c<g (x)<C.
$$
 The  function above may be used to
 construct concrete examples of strongly
  $\cc$-linearly convex domains of class $C^{1,1}$ and no better.
\end{rem}

\newcommand{\doi}[1]{\href{http://dx.doi.org/#1}{#1}}
\newcommand{\arxiv}[1]{\href{https://arxiv.org/pdf/#1}{arXiv:#1}}

  \def\MR#1{\relax\ifhmode\unskip\spacefactor3000 \space\fi
  \href{http://www.ams.org/mathscinet-getitem?mr=#1}{MR#1}}

\bibliographystyle{abbrv}


\begin{bibdiv}
\begin{biblist}

\bib{MR2060426}{book}{
      author={Andersson, M.},
      author={Passare, M.},
      author={Sigurdsson, R.},
       title={Complex convexity and analytic functionals},
      series={Progress in Mathematics},
   publisher={Birkh\"{a}user Verlag, Basel},
        date={2004},
      volume={225},
        ISBN={3-7643-2420-1},
  url={https://doi-org.ezproxy.library.wisc.edu/10.1007/978-3-0348-7871-5},
      review={\MR{2060426}},
}

\bib{MR1512986}{article}{
      author={Behnke, H.},
      author={Peschl, E.},
       title={Zur {T}heorie der {F}unktionen mehrerer komplexer
  {V}er\"{a}nderlichen},
        date={1935},
        ISSN={0025-5831},
     journal={Math. Ann.},
      volume={111},
      number={1},
       pages={158\ndash 177},
         url={https://doi-org.ezproxy.library.wisc.edu/10.1007/BF01472211},
      review={\MR{1512986}},
}

\bib{MR1800297}{book}{
      author={Chen, S.-C.},
      author={Shaw, M.-C.},
       title={Partial differential equations in several complex variables},
      series={AMS/IP Studies in Advanced Mathematics},
   publisher={American Mathematical Society, Providence, RI; International
  Press, Boston, MA},
        date={2001},
      volume={19},
        ISBN={0-8218-1062-6},
      review={\MR{1800297}},
}

\bib{MR526786}{article}{
      author={Dufresnoy, A.},
       title={Sur l'op\'{e}rateur {$d^{\prime\prime}$} et les fonctions
  diff\'{e}rentiables au sens de {W}hitney},
        date={1979},
        ISSN={0373-0956},
     journal={Ann. Inst. Fourier (Grenoble)},
      volume={29},
      number={1},
       pages={xvi, 229\ndash 238},
         url={http://www.numdam.org/item?id=AIF_1979__29_1_229_0},
      review={\MR{526786}},
}

\bib{MR0101294}{article}{
      author={Glaeser, G.},
       title={\'{E}tude de quelques alg\`ebres tayloriennes},
        date={1958},
        ISSN={0021-7670},
     journal={J. Analyse Math.},
      volume={6},
       pages={1\ndash 124; erratum, insert to 6 (1958), no. 2},
         url={https://doi-org.ezproxy.library.wisc.edu/10.1007/BF02790231},
      review={\MR{0101294}},
}

\bib{MR3961327}{article}{
      author={Gong, X.},
       title={H\"{o}lder estimates for homotopy operators on strictly
  pseudoconvex domains with {$C^2$} boundary},
        date={2019},
        ISSN={0025-5831},
     journal={Math. Ann.},
      volume={374},
      number={1-2},
       pages={841\ndash 880},
         url={https://doi.org/10.1007/s00208-018-1693-9},
      review={\MR{3961327}},
}

\bib{MR0273057}{article}{
      author={Grauert, H.},
      author={Lieb, I.},
       title={Das {R}amirezsche {I}ntegral und die {L}\"{o}sung der {G}leichung
  {$\bar \partial f=\alpha $} im {B}ereich der beschr\"{a}nkten {F}ormen},
        date={1970},
        ISSN={0035-4996},
     journal={Rice Univ. Studies},
      volume={56},
      number={2},
       pages={29\ndash 50 (1971)},
      review={\MR{0273057}},
}

\bib{MR2491606}{article}{
      author={Harrington, P.S.},
       title={Sobolev estimates for the {C}auchy-{R}iemann complex on {$C^1$}
  pseudoconvex domains},
        date={2009},
        ISSN={0025-5874},
     journal={Math. Z.},
      volume={262},
      number={1},
       pages={199\ndash 217},
         url={https://doi.org/10.1007/s00209-008-0369-7},
      review={\MR{2491606}},
}

\bib{MR0249660}{article}{
      author={Henkin, G.M.},
       title={Integral representation of functions which are holomorphic in
  strictly pseudoconvex regions, and some applications},
        date={1969},
     journal={Mat. Sb. (N.S.)},
      volume={78 (120)},
       pages={611\ndash 632},
      review={\MR{0249660}},
}

\bib{MR774049}{book}{
      author={Henkin, G.M.},
      author={Leiterer, J.},
       title={Theory of functions on complex manifolds},
      series={Monographs in Mathematics},
   publisher={Birkh\"auser Verlag, Basel},
        date={1984},
      volume={79},
        ISBN={3-7643-1477-8},
      review={\MR{774049}},
}

\bib{MR0293121}{article}{
      author={Henkin, G.M.},
      author={Romanov, A.V.},
       title={Exact {H}\"{o}lder estimates of the solutions of the {$\bar
  \delta $}-equation},
        date={1971},
        ISSN={0373-2436},
     journal={Izv. Akad. Nauk SSSR Ser. Mat.},
      volume={35},
       pages={1171\ndash 1183},
      review={\MR{0293121}},
}

\bib{MR0281944}{article}{
      author={Kerzman, N.},
       title={H\"older and {$L^{p}$} estimates for solutions of {$\bar \partial
  u=f$} in strongly pseudoconvex domains},
        date={1971},
        ISSN={0010-3640},
     journal={Comm. Pure Appl. Math.},
      volume={24},
       pages={301\ndash 379},
         url={https://doi-org.ezproxy.library.wisc.edu/10.1002/cpa.3160240303},
      review={\MR{0281944}},
}

\bib{MR3084008}{article}{
      author={Lanzani, L.},
      author={Stein, E.M.},
       title={Cauchy-type integrals in several complex variables},
        date={2013},
        ISSN={1664-3607},
     journal={Bull. Math. Sci.},
      volume={3},
      number={2},
       pages={241\ndash 285},
  url={https://doi-org.ezproxy.library.wisc.edu/10.1007/s13373-013-0038-y},
      review={\MR{3084008}},
}

\bib{MR3250300}{article}{
      author={Lanzani, L.},
      author={Stein, E.M.},
       title={The {C}auchy integral in {$\Bbb{C}^n$} for domains with minimal
  smoothness},
        date={2014},
        ISSN={0001-8708},
     journal={Adv. Math.},
      volume={264},
       pages={776\ndash 830},
  url={https://doi-org.ezproxy.library.wisc.edu/10.1016/j.aim.2014.07.016},
      review={\MR{3250300}},
}

\bib{LSEx}{article}{
      author={Lanzani, L.},
      author={Stein, E.M.},
       title={The Cauchy-Leray integral: counter-examples to the $L^p$-theory},
        date={2019},
     journal={Indiana U. Math. J.},
      volume={68},
      number={5},
       pages={1609\ndash 1621},
}

\bib{MR597825}{article}{
      author={Lieb, I.},
      author={Range, R.M.},
       title={L\"osungsoperatoren f\"ur den {C}auchy-{R}iemann-{K}omplex mit
  {${\mathcal C}^{k}$}-{A}bsch\"atzungen},
        date={1980},
        ISSN={0025-5831},
     journal={Math. Ann.},
      volume={253},
      number={2},
       pages={145\ndash 164},
         url={https://doi.org/10.1007/BF01578911},
      review={\MR{597825}},
}

\bib{MR1134587}{article}{
      author={Michel, J.},
       title={Integral representations on weakly pseudoconvex domains},
        date={1991},
        ISSN={0025-5874},
     journal={Math. Z.},
      volume={208},
      number={3},
       pages={437\ndash 462},
         url={https://doi.org/10.1007/BF02571538},
      review={\MR{1134587}},
}

\bib{MR1671846}{article}{
      author={Michel, J.},
      author={Shaw, M.-C.},
       title={A decomposition problem on weakly pseudoconvex domains},
        date={1999},
        ISSN={0025-5874},
     journal={Math. Z.},
      volume={230},
      number={1},
       pages={1\ndash 19},
         url={https://doi.org/10.1007/PL00004685},
      review={\MR{1671846}},
}

\bib{MR1135535}{article}{
      author={Peters, K.},
       title={Solution operators for the {$\overline\partial$}-equation on
  nontransversal intersections of strictly pseudoconvex domains},
        date={1991},
        ISSN={0025-5831},
     journal={Math. Ann.},
      volume={291},
      number={4},
       pages={617\ndash 641},
         url={https://doi.org/10.1007/BF01445231},
      review={\MR{1135535}},
}

\bib{MR847923}{book}{
      author={Range, R.M.},
       title={Holomorphic functions and integral representations in several
  complex variables},
      series={Graduate Texts in Mathematics},
   publisher={Springer-Verlag, New York},
        date={1986},
      volume={108},
        ISBN={0-387-96259-X},
  url={https://doi-org.ezproxy.library.wisc.edu/10.1007/978-1-4757-1918-5},
      review={\MR{847923}},
}

\bib{MR0338450}{article}{
      author={Range, R.M.},
      author={Siu, Y.-T.},
       title={Uniform estimates for the {$\bar \partial $}-equation on domains
  with piecewise smooth strictly pseudoconvex boundaries},
        date={1973},
        ISSN={0025-5831},
     journal={Math. Ann.},
      volume={206},
       pages={325\ndash 354},
         url={https://doi.org/10.1007/BF01355986},
      review={\MR{0338450}},
}

\bib{shi2019weighted}{article}{
      author={Shi, Z.},
       title={Weighted sobolev ${L}^p$ estimates for homotopy operators on
  strictly pseudoconvex domains with ${C}^2$ boundary},
        date={2019},
     journal={ArXiv e-prints},
      eprint={1907.00264},
}

\bib{MR330515}{article}{
      author={Siu, Y.-T.},
       title={The {$\bar \partial $} problem with uniform bounds on
  derivatives},
        date={1974},
        ISSN={0025-5831},
     journal={Math. Ann.},
      volume={207},
       pages={163\ndash 176},
         url={https://doi-org.ezproxy.library.wisc.edu/10.1007/BF01362154},
      review={\MR{330515}},
}

\bib{MR0290095}{book}{
      author={Stein, E.M.},
       title={Singular integrals and differentiability properties of
  functions},
      series={Princeton Mathematical Series, No. 30},
   publisher={Princeton University Press, Princeton, N.J.},
        date={1970},
      review={\MR{0290095}},
}

\end{biblist}
\end{bibdiv}

\end{document}